\newcommand{\ol}{\overline}
\newcommand{\one}[1]{\mbox {\bf 1}_{\{#1\}}}
\newcommand{\ms}{{\mathfrak S}}
\newcommand{\witi}{\widetilde}
\newcommand{\nn}{{\mathbb N}}
\newcommand{\rr}{{\mathbb R}}
\newcommand{\zz}{{\mathbb Z}}
\newcommand{\cald}{{\mathcal D}}
\newcommand{\call}{{\mathcal L}}
\newcommand{\calr}{{\mathcal R}}
\newcommand{\cals}{{\mathcal S}}
\newcommand{\calt}{{\mathcal T}}
\newcommand{\veps}{\varepsilon}
\newcommand{\beq}{\begin{eqnarray*}}
\newcommand{\feq}{\end{eqnarray*}}
\newcommand{\beqn}{\begin{eqnarray}}
\newcommand{\feqn}{\end{eqnarray}}
\newcommand{\as}{\mbox{\rm a.\,s.}}
\newcommand{\io}{\mbox{\rm i.\,o.}}
\newtheorem{theorem}{Theorem}
\makeatletter \@addtoreset{theorem}{section}\makeatother
\newtheorem{lemma}[theorem]{Lemma}
\newtheorem{assume}[theorem]{Assumption}
\newtheorem*{theorem*}{Theorem}
\newtheorem{remark}[theorem]{Remark}
\title{On a directionally reinforced random walk}
\author{
Arka Ghosh\footnote{Department of Statistics, Iowa State University, Ames, IA 50011, USA; e-mail: apghosh@iastate.edu}
\and
Reza Rastegar\footnote{Department of Mathematics, Iowa State University, Ames, IA 50011, USA; e-mail: rastegar@iastate.edu}
\and
Alexander Roitershtein\footnote{Department of Mathematics, Iowa State University, Ames, IA 50011, USA; e-mail: roiterst@iastate.edu} }
\begin{document}
\maketitle
\begin{abstract}
We consider a generalized version of a directionally reinforced random walk, which was originally introduced by Mauldin, Monticino,
and von Weizs\"{a}cker in \cite{drw}. Our main result is a stable limit theorem for the position of the random walk in higher dimensions.
This extends a result of Horv\'{a}th and Shao \cite{limits} that was previously obtained in dimension one only (however,
in a more stringent functional form).
\end{abstract}
{\em MSC2000: } Primary: 60F15, 60F17, 60F20; Secondary:  60J25, 70B05.\\
\noindent {\em Keywords:} correlated random walks, directional reinforcement, limit theorems, stable laws.

\section{Introduction}
\label{intro}
In this paper we study the following {\em directionally reinforced random walk}.
Fix $d\in \nn$ and a finite set $U$ of distinct unit vectors in $\rr^d$ (see
Remark~\ref{extensions} at the end of Section~\ref{results} below, where a
suitable alternative Markovian setup in a general state space is discussed).
The vectors in $U$ serve as feasible directions for the motion of the random walk.
To avoid trivialities we assume that $U$ contains at least two elements. Let $X_t \in \rr^d$
denote the position of the random walk at time $t.$ Throughout the paper we assume that $X_0=0.$
The random walk changes its direction at random times
\beq
s_1:=0<s_2<s_3<s_4<....
\feq
We assume that the time intervals
\beq
T_n:=s_{n+1}-s_n,\qquad n\in\nn,
\feq
are independent and identically distributed. Let $\eta_n\in U$ be the direction of the walk during
time interval $[s_n,s_{n+1}).$ We assume that $\eta:=(\eta_n)_{n\geq 1}$ is an irreducible stationary Markov chain on $U$ which is,
furthermore, independent of $(s_n)_{n\in\nn}.$
\par
For $t>0,$ let $N_t:=\sup\bigl\{k\geq 1: s_k\leq t \bigr\}$ be the number of times that the walker changes direction before time $t>0.$
Then
\beqn
\label{rw}
X_t= \sum_{i=1}^{N_t-1}{\eta_i T_i} + (t-s_{N_t})\eta_{N_t} .
\feqn
Notice that $N_t\geq 1$ with probability one, due to the convention $s_1=0$ that we have made.
\par
The random walk $X_t$ defined above is essentially the model introduced by Mauldin, Monticino,
and von Weizs\"{a}cker in \cite{drw} and further studied by Horv\'{a}th and Shao in \cite{limits} and
by Siegmund-Schultzea and von~Weizs\"{a}cker in \cite{levelcr}. The technical difference between our model and the variant which has been studied
in \cite{limits} is that in the latter, the next direction of the motion is chosen uniformly from the available
set of ``fresh directions", while we do not impose any restrictions on the transition kernel of $\eta$ besides irreducibility.
\par
The original model proposed in \cite{drw} was inspired by certain phenomena that occur in ocean surface waves
(cf. \cite{West}) and was designed to reproduce the same features within a probabilistic framework.
The main topic of \cite{drw} and \cite{levelcr} is recurrence-transience criteria. Horv\'{a}th and Shao in \cite{limits} studied scaling limits of the random walk
in different regimes, answering some of the questions which have been posed in \cite{drw}.
\par
We remark that somewhat related random walk models have been considered by Allaart and Monticino in
\cite{srule,rules1} and by Gruber and Schweizer in \cite{diffusion}.
In the context of random walks in random environment, a similar in spirit model of {\em persistent} random walks
was introduced by Sz\'{a}sz and T\'{o}th in \cite{persian,semak}. The common feature of ``generic versions" of all models
mentioned above is that the underlying random motion has a tendency to persist in its current direction.
\par
Closely related to persistent random walks are recurrent ``random flights" models where changes of the direction of the random motion
follow a Poisson random clock. These models can be traced back to Pearson's random walk \cite{drct8, drct9} and Goldstein-Kac
one-dimensional ``telegraph process" \cite{drct7, drct6} .
Random flights have been intensively studied since the introduction of the telegraph process
in the early 50's, see for instance \cite{flights,drct5,drct,drct1,drct3,drct4} and references therein for a representative sample.
An introductory part of \cite{drct} provides a short authoritative and up to a date survey of the field.
We remark that, somewhat in contrary to directionally reinforced random walks, the main focus of the research in this area
is on finding explicit form of limiting distributions for these processes.
\par
The main goal of this paper is to prove stable limit theorems for the directionally reinforced random walk
in arbitrary dimension $d\geq 1.$ In addition, we extend some limit results of \cite{limits} to our setting and
also complement them by suitable laws of iterated logarithm. Our proofs can be easily carried over to a setup where the set of feasible directions
$U$ is not finite, but is rather supported (under the stationary law of the process) on a general Borel subset of the
unit sphere; see Remark~\ref{extensions} below for more details.
\par
Our results are stated in Section \ref{results} whereas the proofs are contained in Section \ref{proofs}.
The non-Gaussian limit theorems for the position of the random walk in higher dimensions, stated in
Theorems~\ref{clts} and \ref{clt1} constitute the main contribution of this paper.
\section{Statement of main results}
\label{results}
We first introduce a few notations.
For a vector $x=(x_1,\ldots,x_d)\in\rr^d$ let $\|x\|=\max_i |x_i|.$ For (possibly, random) functions $f,g:\rr_+~(\mbox{or}~\nn)~\to\rr,$
write $f\sim g$ and $f(t)=o(g(t))$ to indicate that, respectively, $\lim_{t\to\infty} f(t)/g(t)=1$ and $\lim_{t\to\infty}{f(t)/g(t)}=0,$ $\as$
Let $\pi=(\pi_v)_{v\in U}\in \rr^{|U|}$ be the unique stationary distribution of the Markov chain $\eta$ and let
\beqn
\label{demu}
\mu=\sum_{v\in U}\pi_v v.
\feqn
Thus $\mu=E(\eta_n)\in\rr^d$ for each $n\in\nn.$
\par
The following theorem shows that a strong law of large numbers holds for $X_t$ and that, under suitable second moment condition,
the sample paths of the random walk are uniformly close to the sample paths of a drifted Brownian motion.
We have:
\begin{theorem}
\label{strapprox}
\item [(a)] Suppose that $E(T_1^p)<\infty$ for some constant $p \in (1,2).$ Then,
\beq
\big \|X_t- \mu t\big\| = o\bigl(t^{1/p}\bigr).
\feq
\item [(b)]
If $E(T_1^p) < \infty$ for some constant $p>2,$ then (in an enlarged, if needed, probability space) there exist a process $\widehat X=\bigl(\widehat X_t)_{t\geq 0}$
distributed as $X$ and a Brownian motion $(W_t)_{t\geq 0}$ in $\rr^d,$ such that,
\beq
\sup_{0\leq t\leq T}\bigl \| \widehat X_t -  \mu t-W_t \bigr\| = o\bigl(T^{1/p}\bigr).
\feq
\end{theorem}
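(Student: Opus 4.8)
The common starting point for both parts is the exact identity obtained by subtracting $\mu t$ from \eqref{rw}. Using $s_{N_t}=\sum_{i=1}^{N_t-1}T_i$ and writing $\mu t=\mu s_{N_t}+\mu(t-s_{N_t})$, we get
\[
X_t-\mu t=\sum_{i=1}^{N_t-1}(\eta_i-\mu)T_i+(t-s_{N_t})(\eta_{N_t}-\mu).
\]
Since the $\eta_i$ and $\mu$ lie in the unit ball, the second term is bounded in norm by $2\,T_{N_t}$; as $E(T_1^p)<\infty$ gives $T_n=o(n^{1/p})$ $\as$ and the renewal law of large numbers gives $N_t\sim t/\ol T$ $\as$ (with $\ol T:=E(T_1)$), this term is $o(t^{1/p})$ $\as$ in both regimes. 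Everything thus reduces to the centered reward sum $\Phi_n:=\sum_{i=1}^{n}(\eta_i-\mu)T_i$, evaluated at the random index $n=N_t-1$. Here $\psi_i:=(\eta_i-\mu)T_i$ is stationary and centered, since $E\psi_i=E(\eta_i-\mu)\,E(T_i)=0$, and it is an exponentially mixing functional of the pair $(\eta_i,T_i)$.

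To remove the Markovian dependence I would use the regeneration structure of the finite irreducible chain $\eta$: fixing a reference state $v_0\in U$ with successive visit times $0\le\tau_0<\tau_1<\cdots$, the cycle sums $\xi_k:=\sum_{i=\tau_{k-1}}^{\tau_k-1}\psi_i$ and cycle lengths $\ell_k:=\tau_k-\tau_{k-1}$ are i.i.d.\ for $k\ge1$. A standard computation using $E(\ell_1)=\pi_{v_0}^{-1}$ and $\sum_v\pi_v(v-\mu)=0$ gives $E(\xi_1)=0$, while the geometric tail of $\ell_1$ together with $E(T_1^p)<\infty$ yields $E\|\xi_1\|^p<\infty$. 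Writing $\Phi_n$ as a sum of $m_n\sim\pi_{v_0}n$ complete cycle sums plus a boundary remainder (dominated by a single cycle sum, hence $o(n^{1/p})$ $\as$ by Borel--Cantelli) reduces the problem to i.i.d.\ statements. For part (a), with $1<p<2$, the Marcinkiewicz--Zygmund strong law applied to the centered $\xi_k$ gives $\sum_{k\le m_n}\xi_k=o(n^{1/p})$, whence $\Phi_{N_t-1}=o(t^{1/p})$ and the claim follows. For part (b), with $p>2$, a multivariate KMT-type strong approximation for i.i.d.\ sums (Einmahl, Zaitsev) produces, on an enlarged space, an $\rr^d$-valued Brownian motion $\widetilde B$ with covariance $\Sigma$ equal to the long-run covariance of $\psi$ such that $\max_{k\le n}\|\Phi_k-\widetilde B(k)\|=o(n^{1/p})$ $\as$

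It remains to convert the discrete clock $n=N_t-1$ into the continuous clock $t$, and this is where the genuine difficulty lies. Setting $W_t:=\widetilde B(t/\ol T)$, a Brownian motion in $\rr^d$ with covariance $(t/\ol T)\Sigma$, the above reductions give, uniformly over $t\le T$,
\[
\widehat X_t-\mu t-W_t=\bigl(\Phi_{N_t-1}-\widetilde B(N_t-1)\bigr)+\bigl(\widetilde B(N_t-1)-W_t\bigr)+o(T^{1/p}),
\]
where $\widehat X$ is the copy of $X$ carried by the enlarged space. The first bracket is $o(T^{1/p})$ by the previous step. The second bracket equals $\widetilde B(N_t-1)-\widetilde B(t/\ol T)$ and is the main obstacle: by the renewal law of the iterated logarithm, $|N_t-1-t/\ol T|=O\bigl(\sqrt{t\log\log t}\bigr)$ $\as$, so, $N_t$ being nondecreasing, this increment is controlled by the modulus of continuity of $\widetilde B$ over windows of length $O(\sqrt{T\log\log T})$ and is therefore of order $T^{1/4}$ up to logarithmic factors. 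This term is the dominant contribution and dictates the attainable rate: I expect the delicate point of the proof to be showing that the $\sqrt t$-scale fluctuations of the renewal clock, composed with the square-root roughness of Brownian sample paths, can be accommodated within $o(T^{1/p})$, and controlling it uniformly in $t\le T$ via sharp increment estimates for $\widetilde B$ together with the coupling of $\widetilde B$ to the renewal process. Once this bound is secured, collecting the three contributions yields $\sup_{0\le t\le T}\|\widehat X_t-\mu t-W_t\|=o(T^{1/p})$ $\as$, completing part (b).
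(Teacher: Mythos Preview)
Your treatment of part~(a) is correct and essentially identical to the paper's: both use the regeneration at a fixed state, check $E\|\xi_1\|^p<\infty$ via the geometric tail of the cycle length, apply the Marcinkiewicz--Zygmund law to the i.i.d.\ cycle sums, and dispose of the boundary piece by Borel--Cantelli. The paper packages the reduction to $\mu=0$ via the auxiliary walk $\widetilde X_t$, while you center the increments directly; these are equivalent.

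Part~(b), however, has a genuine gap. With your coupling $W_t=\widetilde B(t/\ol T)$, the term $\widetilde B(N_t-1)-\widetilde B(t/\ol T)$ is not a loose estimate to be sharpened: it is \emph{genuinely} of order $T^{1/4}$ (up to logarithms) whenever $T_1$ has positive variance, because $|N_t-1-t/\ol T|$ is of exact order $\sqrt{t}$ and the Brownian increment over such a window is of exact order $t^{1/4}$. No ``sharp increment estimate'' or appeal to the coupling between $\widetilde B$ and the renewal clock will beat this---consider already the case where $\eta$ is i.i.d.\ and $\mu=0$: then $\Phi_{N_t-1}-\Phi_{[t/\ol T]}$ is a sum of roughly $\sqrt t$ centred terms that are conditionally independent of the index set given the $T_i$'s, so its size is $t^{1/4}$. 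Hence your argument proves the theorem only for $2<p<4$; for $p\ge 4$ the bound $o(T^{1/p})$ fails with this choice of $W$.

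The missing idea is that $W$ must be built from a \emph{joint} strong approximation of the spatial sums and the renewal clock, not from the spatial approximation alone followed by a deterministic time change. The paper does exactly this: it forms the i.i.d.\ $(d+1)$-dimensional vectors $\gamma_k=(\bar\xi_k,\Delta_k)$, where $\Delta_k$ records the centred cycle \emph{durations}, applies a KMT-type approximation to obtain a $(d+1)$-dimensional Brownian motion $B$ with $\sup_{t\le T}\|\Gamma(t)-B(t)\|=o(T^{1/p})$, and then invokes a theorem of Cs\"org\H{o}--Horv\'ath (their Theorem~2.3.6) specifically designed to transfer such a joint approximation through a random time change. That result constructs a $d$-dimensional Brownian motion $W$ that absorbs both the spatial fluctuations and the $\sqrt t$-scale fluctuations of $c(t)$ simultaneously, yielding $\sup_{t\le T}\bigl\|\sum_{k<c(t)}\xi_k-\mu t-W(t)\bigr\|=o(T^{1/p})$ for all $p>2$. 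The remaining boundary term $r_{c(t)}$ is then handled exactly as you do.
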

\begin{remark}
The results stated in Theorem~\ref{strapprox} as well as in Theorem~\ref{clt} below are essentially due to \cite{limits}.
In fact, the original proofs can be adapted to our more general setup.
However, the proofs we give in Section~\ref{proofs} are shorter and somewhat simpler than the original ones.
Furthermore, our proves can easily be seen working for the general Markov chain setup described in Remark~\ref{extensions} below.
\end{remark}
The second part of Theorem~\ref{strapprox} implies the invariance principle for $(X_{nt}-\mu n t)$
with the usual normalization $\sqrt{n}.$ We next state an invariance principle and the corresponding law of iterated
logarithm under a slightly more relaxed moment condition.
Let $D\bigl(\rr^d\bigr)$ denote the set of $\rr^d$-valued c\`{a}dl\`{a}g functions
on $[0,1]$ equipped with the Skorokhod $J_1$-topology. We use notation $\Rightarrow$ to denote the weak convergence in $D\bigl(\rr^d\bigr).$
We have:
\begin{theorem}
\label{clt}
For $n\in \nn,$ define a process $S_n$ in $D\bigl(\rr^d\bigr)$ by setting
\beqn
\label{dpath1}
S_n(t)=\frac{X_{nt}-\mu n t}{\sqrt{n}}, \qquad t \in [0,1].
\feqn
If $E(T_1^2)<\infty,$ then
\item [(a)] $S_n\Rightarrow W,$ where $W=(W_t)_{t\geq 0}$ is a (possibly degenerate, but not identically equal to zero)
$d$-dimensional Brownian motion.
\item [(b)] For every $x\in\mbox{\rm Span}(U)\subset \rr^d,$ there is a constant $K(x)\in (0,\infty)$ such that
\beq
\limsup_{t\to\infty} \frac{\left(X_t-\mu t\right)\cdot x}{\sqrt{t\ln\ln t}} = K(x).
\feq
Furthermore, a similar statement holds for the $\liminf.$
\end{theorem}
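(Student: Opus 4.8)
The plan is to reduce both parts to the partial sums of the centered stationary sequence $\zeta_i:=(\eta_i-\mu)T_i$ and then to exploit the regenerative structure of the finite irreducible chain $\eta$; note that since we only assume $E(T_1^2)<\infty$ we cannot invoke the strong approximation of Theorem~\ref{strapprox}(b), so the invariance principle must be obtained directly. Using $t=\sum_{i=1}^{N_t-1}T_i+(t-s_{N_t})$ I would first write
\[
X_t-\mu t=\sum_{i=1}^{N_t-1}(\eta_i-\mu)T_i+(t-s_{N_t})(\eta_{N_t}-\mu).
\]
The boundary term is bounded in norm by $2(t-s_{N_t})\le 2T_{N_t}\le 2\max_{i\le N_t}T_i$, and since $E(T_1^2)<\infty$ yields $\max_{i\le m}T_i=o(\sqrt m)$ \as\ while the renewal law of large numbers gives $N_{nt}=O(n)$ uniformly on $[0,1]$, this term is $o(\sqrt n)$ uniformly and is negligible for both conclusions. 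Hence everything reduces to the vector partial sums $\Sigma_m:=\sum_{i=1}^m\zeta_i$ read at the random renewal index $m=N_{nt}-1$. Here $E(\zeta_i)=E(\eta_i-\mu)\,E(T_1)=0$ by independence of $(T_i)$ and $\eta$, and $\mathrm{Cov}(\zeta_0,\zeta_k)$ decays geometrically because a finite irreducible chain is uniformly ergodic.

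To handle $\Sigma_m$ I would fix a reference state $v_0\in U$, let $0\le\tau_0<\tau_1<\cdots$ be the successive visits of $\eta$ to $v_0$, and form the block sums $B_k:=\sum_{i=\tau_{k-1}}^{\tau_k-1}\zeta_i$. By the strong Markov property and the independence of $(T_i)$ from $\eta$ the $B_k$, $k\ge1$, are i.i.d. I would check $E(B_k)=0$: with $L_k:=\tau_k-\tau_{k-1}$ (a chain-measurable, hence $(T_i)$-independent, length) one has $E(B_k)=E\bigl(\sum_{i=\tau_{k-1}}^{\tau_k-1}(\eta_i-\mu)\bigr)E(T_1)$, and the occupation identity $\pi_v\propto E(\#\{i\in[\tau_0,\tau_1):\eta_i=v\})$ gives $E(\sum_{i=\tau_0}^{\tau_1-1}\eta_i)=E(L_1)\mu$, so the bracket vanishes. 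Finiteness of $E(\|B_k\|^2)$ follows from $\|B_k\|\le 2\sum_{i=\tau_{k-1}}^{\tau_k-1}T_i$, the independence of $L_1$ from $(T_i)$, the identity $E[(\sum_{i\le L}T_i)^2]=E(L)\,\mathrm{Var}(T_1)+E(L^2)\,(ET_1)^2$, and the exponential tails of $L_1$. Donsker's theorem in $\rr^d$ applied to $\sum_{k\le[n\,\cdot]}B_k$, together with a standard estimate showing the initial block and the final incomplete cycle are negligible at scale $\sqrt n$, then upgrades to $n^{-1/2}\Sigma_{[nu]}\Rightarrow\witi W_u$, a Brownian motion with covariance $\Sigma=E(B_1B_1^{T})/E(L_1)$.

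For part (a) I would insert the renewal time change: the functional renewal law of large numbers gives $N_{nt}/n\to t/E(T_1)$ uniformly on $[0,1]$ \as, so the random-change-of-time lemma (continuity of composition at a deterministic continuous limit) turns the discrete invariance principle into $S_n\Rightarrow W$ with $W_t=\witi W_{t/E(T_1)}$, a Brownian motion of covariance $\Gamma=\Sigma/E(T_1)$. The limit lives on $\mathrm{Span}(U)$; it is not identically zero because $B_1$ is non-degenerate (as $\mu\notin U$, every $v-\mu$ is nonzero), so $\Gamma\neq0$, while $\Gamma$ may still be singular as a matrix on $\rr^d$.

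For part (b) I would project onto $x$: up to the negligible boundary term, $(X_t-\mu t)\cdot x$ equals the regenerative sum $\sum_{k\le M_t}(B_k\cdot x)$ of i.i.d. mean-zero variables of finite variance $\sigma_B^2(x)=x^{T}E(B_1B_1^{T})x$, where $M_t\sim t/\bigl(E(T_1)E(L_1)\bigr)$ counts completed cycles. The Hartman--Wintner LIL gives $\limsup_m(\sum_{k\le m}B_k\cdot x)/\sqrt{2m\sigma_B^2(x)\ln\ln m}=1$ \as; since $M_t$ grows linearly one has $\ln\ln M_t\sim\ln\ln t$, and matching normalizations produces $K(x)=\sqrt{2\,x^{T}\Gamma x}$. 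The two genuinely delicate points, which I expect to be the main obstacle, are the \emph{pathwise} transfer of the LIL from the cycle index $m$ to continuous time $t$ --- one must bound the within-cycle and boundary oscillations at the exact scale $\sqrt{t\ln\ln t}$, e.g.\ through a L\'evy-type maximal inequality over a single cycle combined with Borel--Cantelli --- and the strict positivity $K(x)>0$, i.e.\ $\sigma_B^2(x)>0$: here one uses that $T_1$ is nondegenerate and that irreducibility forces $\eta$ to visit every state, so $B_1\cdot x$ cannot be \as\ constant unless $x$ is orthogonal to every $v-\mu$. The $\liminf$ assertion is entirely symmetric.
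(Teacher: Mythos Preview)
Your approach is correct and is essentially the same as the paper's: both arguments use the regenerative cycle decomposition of $\eta$ into i.i.d.\ blocks with finite second moments, apply Donsker (respectively Hartman--Wintner) to the block sums, and then perform a random time change via the renewal law of large numbers, with the remainder controlled by the tail of a single cycle. The differences are purely organizational---you center each step directly via $\zeta_i=(\eta_i-\mu)T_i$ whereas the paper first reduces to the case $\mu=0$ by passing to $\tilde\eta_i=\eta_i-\mu$ and the associated walk $\widetilde X_t$, and you time-change by the step counter $N_t$ while the paper time-changes by the cycle counter $c(t)$---but these amount to the same bookkeeping.
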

We next consider the case when $E(T_1^2)=\infty$ and $T_1$ is in the domain of attraction of a stable law.
Namely, for the rest of our results we impose the following assumption.
Recall that a function $h:\rr_+\to\rr$ is said to be {\em regularly varying of index} $\alpha\in\rr$
if $h(t)=t^\alpha L(t)$ for some $L:\rr_+\to\rr$ such that $L(\lambda t)\sim L(t)$ for all $\lambda >0.$
We will denote the set of all regularly varying functions of index $\alpha$ by $\calr_\alpha.$
\begin{assume}
\label{amain}
There is $h\in\calr_\alpha$ with $\alpha \in (0,2]$ such that $\lim_{t\to\infty} h(t) \cdot P(T_1>t)\in (0,\infty).$
\end{assume}
For $t>0$ let
\beqn
\label{an}
a_t=
\left\{
\begin{array}{lll}
\inf\,\{s>0: t\cdot P(T_1>s)\leq 1\}&\mbox{\rm if}&\alpha <2,\\
&
\\
\inf\,\{s>0: ts^{-2}\cdot E\bigl(T_1^2;\,T_1\leq s\bigr)\leq 1\}&\mbox{\rm if}&\alpha =2
\end{array}
\right.
\feqn
If $h(t)\in\calr_\alpha$ with $\alpha\in(1,2]$ (and hence $E(T_1)<\infty$), one can obtain the following
analogue of Theorem~\ref{clt}. It turns out that also in this case the functional limit theorem and
the law of iterated logarithm for $X_t$ inherit the structure of the corresponding
statements for the partial sums of i.i.d. variables $\sum_{k=1}^n T_k.$
\begin{theorem}
\label{clts}
Let Assumption~\ref{amain} hold with $\alpha\in (1,2].$ Let
\beq
S_t:=\frac{X_t-\mu t}{a_t}, \qquad t>0.
\feq
We have:
\item [(a)] If $\alpha \in (1,2),$ then
\begin{itemize}
\item [(i)] $S_t$ converges weakly to a non-degenerate multivariate stable law in $\rr^d.$
\item [(ii)] For every $x\in\mbox{\rm Span}(U) \subset \rr^d$ such that $x\cdot u>0$ for some $u\in U,$
\beq
\limsup_{t\to\infty} \frac{(X_t-\mu t)\cdot x}{a_t\cdot (\ln t)^{1/\alpha+\veps}}=
\left\{
\begin{array}{ll}
0&\mbox{\rm ~if~}~\veps>0,\\
\infty&\mbox{\rm ~if~}~\veps<0
\end{array}
\right.\qquad \mbox{\rm a.\,s.}
\feq
In particular, for some constant $c(x)>0,$
\beq
\limsup_{t\to\infty} \Bigl\{\frac{(X_t-\mu t) \cdot x}{a_t}\Bigr\}^{1/\ln\ln t}=c(x)\qquad \mbox{\rm a.\,s.}
\feq
\end{itemize}
\item [(b)] If $\alpha=2$ and $E(T_1^2)=\infty,$ then $S_t$ converges weakly to a non-degenerate multivariate Gaussian
distribution in $\rr^d.$
\end{theorem}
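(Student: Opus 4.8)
The plan is to reduce the continuous-time statement to a limit theorem for the Markov-modulated heavy-tailed sums $\sum_{i=1}^n(\eta_i-\mu)T_i$ and then transfer it back through a random-index argument. First I would record the exact decomposition
\[
X_t-\mu t=\sum_{i=1}^{N_t}(\eta_i-\mu)T_i-R_t\,(\eta_{N_t}-\mu),\qquad R_t:=s_{N_t+1}-t,
\]
which follows from $s_{N_t}=\sum_{i=1}^{N_t-1}T_i$ and $t=s_{N_t}+(t-s_{N_t})$. Since $\alpha>1$ forces $E(T_1)<\infty$, the renewal process generated by $(T_n)$ has a tight forward recurrence time $R_t$, so $R_t/a_t\to 0$ in probability and the boundary term is asymptotically negligible; moreover the strong law gives $N_t/t\to 1/E(T_1)=:\lambda$, $\as$ This isolates the discrete sum $\sum_{i=1}^{N_t}(\eta_i-\mu)T_i$ as the only term that can survive the normalization by $a_t$.

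The core step is the discrete stable limit $\frac{1}{a_n}\sum_{i=1}^n(\eta_i-\mu)T_i\Ra Z$. I would prove it by conditioning on the whole chain $\eta$: given $\eta$, the summands $(\eta_i-\mu)T_i$ are independent vectors with deterministic coefficients $\eta_i-\mu$ drawn from the finite set $\{v-\mu:v\in U\}$, and the conditional characteristic function factorizes as $\prod_{i=1}^n E\bigl[\exp\bigl(i(\theta\cdot(\eta_i-\mu))T_1/a_n\bigr)\bigr]$. Expanding each factor with the help of Assumption~\ref{amain} and regular variation, and using that the empirical frequencies of the states of $\eta$ converge to $\pi$, $\as$, the conditional log-characteristic function converges, for almost every realization of $\eta$, to the stable exponent $\psi(\theta)=\sum_{v\in U}\pi_v\,\phi_\alpha\bigl((v-\mu)\cdot\theta\bigr)$, where $\phi_\alpha$ is the one-dimensional $\alpha$-stable exponent. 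Because this limit does not depend on the realization of $\eta$, bounded convergence on the characteristic functions upgrades the conditional statement to the unconditional one. Two points need care here: the centering is exactly right because the drift produced by the first-order term, $iE(T_1)\,\theta\cdot\sum_{i=1}^n(\eta_i-\mu)/a_n$, vanishes since $\sum_{i\le n}(\eta_i-\mu)=O(\sqrt{n})=o(a_n)$ for $\alpha<2$; and non-degeneracy holds because $|U|\ge 2$ guarantees $v-\mu\neq 0$ for some $v$, so $\psi$ is a genuine (possibly lower-dimensional) stable exponent. Combining this with the first step through Anscombe's theorem for the random index $N_t\sim\lambda t$ and the regular variation $a_{N_t}/a_t\to\lambda^{1/\alpha}$ yields $S_t\Ra\lambda^{1/\alpha}Z$, which is part~(a)(i). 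Part~(b) is the same scheme with a Lindeberg--Feller central limit theorem replacing the stable limit: the conditional covariance is $\frac{1}{a_n^2}\sum_{i=1}^n(\eta_i-\mu)(\eta_i-\mu)^{\top}E\bigl(T_1^2;T_1\le a_n\bigr)\to\sum_{v\in U}\pi_v(v-\mu)(v-\mu)^{\top}$ by the defining relation~\eqref{an} for $a_t$, the Markov cross-covariances being of lower order because $a_n^2\gg n$ in the infinite-variance regime.

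For the law of iterated logarithm in part~(a)(ii) I would again condition on $\eta$ and study the projected sum $\sum_{i=1}^n\bigl((\eta_i-\mu)\cdot x\bigr)T_i$. The hypothesis on $x$ ensures $\beta:=\max_{v\in U}(v-\mu)\cdot x>0$, attained at some state $v^\ast$, and the $\limsup$ is driven by the single largest term with a positive coefficient. The decisive computation is the tail estimate $P\bigl(T_1>a_n(\ln n)^{1/\alpha+\veps}\bigr)\sim n^{-1}(\ln n)^{-1-\alpha\veps}$, obtained from Assumption~\ref{amain}, regular variation, and $nP(T_1>a_n)\to$ const. For $\veps>0$ the series $\sum_n n^{-1}(\ln n)^{-1-\alpha\veps}$ converges, so by Borel--Cantelli the largest term is eventually below $\beta\,a_n(\ln n)^{1/\alpha+\veps}$ while a truncated-sum bound controls the remaining contribution, giving $\limsup=0$; for $\veps<0$ the series diverges and, since $\eta$ visits $v^\ast$ with positive frequency and the $T_i$ are conditionally independent, a second Borel--Cantelli produces infinitely many terms exceeding the threshold, giving $\limsup=\infty$. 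Transferring from $n$ to $t$ via $N_t\sim\lambda t$ (hence $\ln N_t\sim\ln t$ and $a_{N_t}\sim\lambda^{1/\alpha}a_t$) yields the stated dichotomy, and taking logarithms in it gives the $\{\,\cdot\,\}^{1/\ln\ln t}$ corollary with a positive constant $c(x)$.

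The main obstacle I anticipate is the discrete stable limit together with the lower bound for the LIL: one must justify the conditional-to-unconditional passage (controlling the centering drift and verifying the Anscombe condition for the infinite-variance random-index sum), and, in the LIL, coordinate the recurrence of the modulating chain to the extremal state $v^\ast$ with the independent heavy-tailed increments so that the divergent Borel--Cantelli actually forces infinitely many large excursions in the direction $x$.
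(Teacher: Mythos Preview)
Your approach is sound but genuinely different from the paper's. The paper does not condition on $\eta$; instead it exploits the regenerative structure induced by successive returns of $\eta$ to a fixed state $u_1$, forming i.i.d.\ block increments $\xi_k=\sum_{j=\tau_k+1}^{\tau_{k+1}}T_j\eta_j$. The key technical lemma there is that $\xi_1$ is multivariate regularly varying with the same index $\alpha$ and the same normalizing sequence $a_n$ as $T_1$ (proved by expanding over cycle paths and using that the sub-Markovian kernel $\Theta(u,v)=H(u,v)\one{v\neq u_1}$ has spectral radius $<1$). Once $\xi_1\in\calr_{d,\alpha,a}$, the stable CLT for i.i.d.\ vectors gives the limit for $\sum_{k\le n}\xi_k$, and a random-time-change via the block counter $c(t)$ rather than $N_t$ transfers it to $X_t$. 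Your conditional characteristic-function route bypasses the regular-variation lemma and yields the limiting exponent $\psi(\theta)=\sum_{v}\pi_v\phi_\alpha((v-\mu)\cdot\theta)$ more explicitly; the price is that Anscombe's fluctuation condition must be verified for the Markov-modulated (not i.i.d.) partial sums, whereas in the paper the random-index step is applied to genuinely i.i.d.\ blocks. For the LIL in~(a)(ii) the paper again works direction by direction, setting $\witi B_v(t)=\sum_{i<c_v(t)}(T_{\tau_v(i)}-E(T_1))$, invoking the heavy-tailed LIL of Borovkov--Borovkov for each $\witi B_v$, and then ruling out cancellations between directions by a Borel--Cantelli argument on simultaneous large-deviation events; your single-big-jump argument is more elementary but the lower bound still requires showing that one large term at state $v^\ast$ actually pushes the \emph{partial sum} above the threshold, which is essentially the same coordination issue. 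One small correction: the hypothesis ``$x\cdot u>0$ for some $u\in U$'' does not by itself imply $\beta=\max_v(v-\mu)\cdot x>0$ (take $U=\{e_1,e_2\}$ and $x=e_1+e_2$), so you should assume directly that not all $v\cdot x$ coincide.
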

For $\alpha \in (0,1)$ we have the following limit theorem.
\begin{theorem}
\label{clt1}
Let Assumption~\ref{amain} hold with $\alpha\in (0,1).$ Then $\frac{X_t}{t}$ converges weakly in $\rr^d$ to a non-degenerate limit.
\end{theorem}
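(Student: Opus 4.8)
The plan is to realize $X_t/t$ as an almost surely continuous functional, evaluated at a first--passage time, of a bivariate L\'evy process that arises as the joint scaling limit of the partial sums of the interval lengths and of the displacements. Throughout write $S_k=\sum_{i=1}^kT_i$, so that $s_k=S_{k-1}$ and $N_t=\sup\{k:S_{k-1}\le t\}$, and recall that for $\alpha\in(0,1)$ the normalization $a_t$ of \eqref{an} is regularly varying of index $1/\alpha>1$; in particular $a_t\gg t$ and $N_{a_n}$ is of order $n$, while $t\approx S_{N_t}\approx a_{N_t}$. Define, for $n\in\nn$ and $s\ge 0$,
\[
A_n(s)=\frac{1}{a_n}\sum_{i=1}^{\lfloor ns\rfloor}T_i,\qquad B_n(s)=\frac{1}{a_n}\sum_{i=1}^{\lfloor ns\rfloor}\eta_i T_i .
\]
I would first prove the joint convergence $(A_n,B_n)\Ra(V,Y)$ in $D\bigl([0,\infty),\rr^{1+d}\bigr)$ with the $J_1$--topology, where $V$ is the $\alpha$--stable subordinator of the scalar stable functional limit theorem (valid under Assumption~\ref{amain}) and $(V,Y)$ is the pure--jump L\'evy process with $\Delta Y_r=\xi_r\,\Delta V_r$ for an i.i.d. sequence $(\xi_r)$ of $U$--valued directions with law $\pi$, independent of $V$; equivalently its L\'evy measure is the image of $\tfrac{c\,\alpha\,dr}{r^{1+\alpha}}\otimes\pi(d\xi)$ under $(r,\xi)\mapsto(r,r\xi)$.

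The content of this claim is the decoupling of the directions. Since $\eta$ is independent of $(T_i)$, the locations of the finitely many asymptotically dominant intervals are, by exchangeability of $(T_i)$, asymptotically independent and uniformly distributed along the index axis and, crucially, independent of the chain $\eta$. Sampling the stationary chain at asymptotically independent, uniformly spread locations yields directions that are asymptotically i.i.d. with law $\pi$: if $I,J$ are independent uniform indices independent of $\eta$, then the law of $(\eta_I,\eta_J)$ is governed by $\tfrac1n\sum_{\ell=0}^{n-1}P^\ell(v,w)\to\pi_w$, which is the ergodic (Ces\`aro) theorem valid for \emph{every} finite irreducible chain, regardless of its period. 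This is what produces the i.i.d.-$\pi$ marks $\xi_r$ in the limit; combining it with a marked point--process convergence, the usual negligibility of the small jumps, and standard tightness gives the joint $J_1$ convergence.

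Next I would transfer the limit to $X_t/t$ by continuous mapping at the first--passage time $\tau=\inf\{s\ge 0:V_s\ge 1\}$, which is finite since $V$ is a nontrivial subordinator. Taking terminal time $t=a_n$ and $m=N_{a_n}$, so that $A_n\bigl(\tfrac{m-1}{n}\bigr)\le 1<A_n\bigl(\tfrac{m}{n}\bigr)$, formula \eqref{rw} gives
\[
\frac{X_{a_n}}{a_n}=B_n\Bigl(\tfrac{m-1}{n}\Bigr)+\Bigl(1-A_n\bigl(\tfrac{m-1}{n}\bigr)\Bigr)\eta_m,\qquad \eta_m=\frac{B_n(\tfrac{m}{n})-B_n(\tfrac{m-1}{n})}{A_n(\tfrac{m}{n})-A_n(\tfrac{m-1}{n})} .
\]
An $\alpha$--stable subordinator crosses a fixed level by a jump (it does not creep), with strictly positive undershoot $1-V_{\tau-}$ and overshoot, so $v\mapsto v_{\tau-}$, $v\mapsto\Delta v_\tau$ and their $Y$--analogues are a.s. continuous at $(V,Y)$ in $J_1$. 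Continuous mapping then yields $X_{a_n}/a_n\Ra W^{\ast}:=Y_{\tau-}+(1-V_{\tau-})\,\xi_\tau$, where $\xi_\tau=\Delta Y_\tau/\Delta V_\tau$ is the direction of the straddling interval; the passage from $t=a_n$ to arbitrary $t\to\infty$ is routine, choosing $n=n(t)$ with $a_{n(t)}/t\to1$ by regular variation of $a_{\cdot}$. Since $V$ is driftless, $\sum_{r<\tau}\Delta V_r=V_{\tau-}$ and $V_{\tau-}+(1-V_{\tau-})=1$, so
\[
W^{\ast}=\sum_{r<\tau}\xi_r\,\Delta V_r+(1-V_{\tau-})\,\xi_\tau
\]
is a random convex combination of the i.i.d. directions $\xi_r\in U$ with nonnegative weights summing to one, hence valued in the convex hull of $U$; because the chain is irreducible, $\pi_v>0$ for all $v\in U$ and $|U|\ge 2$, so the direction carried by the dominant jump is genuinely random and $W^{\ast}$ is not a.s. constant, giving non--degeneracy.

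The main obstacle is the decoupling step inside the joint functional limit theorem: one must show that the directions attached to the asymptotically dominant intervals behave like independent $\pi$--samples despite the Markovian, and possibly periodic, dependence of $\eta$. The key point, which I would emphasize, is that the randomization is supplied by the uniformly spread large--jump locations rather than by mixing of the chain, so the ergodic theorem suffices and periodic chains are covered without extra work. Everything else—the scalar stable functional limit theorem, the continuous mapping at the first passage of a non--creeping stable subordinator, and the non--degeneracy—I expect to be comparatively standard.
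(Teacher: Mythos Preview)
Your approach is sound and takes a genuinely different route from the paper's. The paper proceeds via the regeneration structure of $\eta$: it introduces the successive return times $\tau_i$ to a fixed state $u_1$, proves (Lemma~\ref{xi-tail}) that the resulting i.i.d.\ block displacements $\xi_i=\sum_{\tau_i<j\le\tau_{i+1}}T_j\eta_j$ and block durations are jointly multivariate regularly varying of index $\alpha$, applies the stable functional limit theorem to this i.i.d.\ pair sequence, and composes with the inverse subordinator to obtain the limit $\call_\alpha=S_\alpha\bigl(U_\alpha^{-1}(1)\bigr)$. You instead bypass regeneration entirely and work directly with the step-by-step process $(T_i,\eta_iT_i)$; the new ingredient is your decoupling argument, which shows that the directions attached to the large jumps of $(T_i)$ become asymptotically i.i.d.\ with law $\pi$ because the \emph{locations} of those jumps are exchangeable (hence asymptotically uniform) and independent of $\eta$, so that only the Ces\`aro ergodic theorem for $\eta$ is needed. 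The paper's route has the advantage of reducing immediately to off-the-shelf i.i.d.\ heavy-tail theory and of transferring verbatim to general-state-space chains satisfying a Doeblin-type minorization (cf.\ Remark~\ref{extensions}). Your route buys a structurally transparent limit law---$W^\ast$ is visibly a random convex combination of the directions in $U$---and it treats the straddling contribution $(t-s_{N_t})\eta_{N_t}$ on the same footing as the rest: for $\alpha\in(0,1)$ the age $t-s_{N_t}$ is of exact order $t$ by Dynkin--Lamperti, so it is \emph{not} asymptotically negligible after division by $t$, and your term $(1-V_{\tau-})\xi_\tau$ captures it explicitly, whereas the paper disposes of the corresponding remainder through the bound $r_{c(t)}$ and an appeal to a renewal theorem that is, at this level of detail, imprecise in the infinite-mean regime.
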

\begin{remark}
The limiting random law in the statement of Theorem~\ref{clt1} is specified in
\eqref{specific} below. The stable limit laws for $X_t$ stated in Theorems~\ref{clts} and Theorem~\ref{clt1} are extensions of corresponding
one-dimensional results in \cite{limits}. The latter however are obtained in \cite{limits} in a more stringent
functional form. The law of iterated logarithm given in Theorem~\ref{clts}
appears to be new even for $d=1.$
\end{remark}
\begin{remark}
\label{extensions}
Recall  Markov chain $\eta=(\eta_n)_{n\geq 0}$ which records successive directions of the random walk.
Let $\cals^{d-1}$ denote the $d$-dimensional unit sphere and let $\calt_d$ denote the $\sigma$-algebra of
the Borel sets of $\cals^{d-1}.$ Denote by $H(x,A)$ transition kernel of $\eta$ on $(\cals^{d-1},\calt_d).$
We remark that
\item [(i)] All the results stated in this section remain true for an arbitrary (not stationary)
initial distribution of the Markov chain $\eta.$
\item [(ii)] The proofs of our results given in Section~\ref{proofs} rest on the exploiting
of a regenerative (renewal) structure associated with $\eta,$ i.e. on the use of random times $\tau_n$ which are
introduced below in Section~\ref{prelim}. It is then not hard to verify that all the results stated in this section, with the only exception of the generalized
law of iterated logarithm given in part (a)-(ii) of Theorem~\ref{clts}, remain true
for a class of regenerative (in the sense of \cite{renewal-appr}) Markov chains $\eta$
whose stationary  distribution are supported on general Borel subsets of $\cals^{d-1}$
rather than on a finite set $U\subset \cals^{d-1}.$ For instance,
the following strong version of the classical Doeblin's conditions is sufficient for our purposes:
\begin{itemize}
\item  There exist a constant $c_r > 1$ and a probability
measure $\psi$ on $(\cals^{d-1},\calt_d)$ such that
\beqn
\label{bounds}
c^{-1}_r\psi(A) < H(x,A)<c_r \psi(A)\qquad
\forall x \in \cals,\,A \in \calt_d.
\feqn
\end{itemize}
A regenerative (renewal) structure for Markov chains which satisfies Doeblin's condition is described in \cite{renewal-appr}.
Due to the fact that under the assumption \eqref{bounds}, the kernel $H(x,A)$ is dominated uniformly from above and below by
a probability measure $\psi,$ such Markov chains share two key features with finite-state Markov chains. Namely,
1) the exponential bound stated in \eqref{tau-dist} holds for the renewal times which are defined in \cite{renewal-appr};
and 2) $c^{-1}_r < P_x(A)/P_y(A)<c_r$ for any non-null event $A \in \calt_d$ and almost every states $x,y\in \cals^{d-1}$
(with respect to the stationary law). Here $P_x$ stands for the law of the Markov chain $\eta$ starting from the initial state $x\in \cals^{d-1}.$
Once these two crucial properties are verified, our proofs (except only the proof of part (a)-(ii) of Theorem~\ref{clts})
work nearly verbatim for directionally reinforced random walks governed by a Markov chain $\eta$ which satisfies
condition \eqref{bounds}.
\end{remark}

\section{Proofs}
\label{proofs}
This section is devoted to the proof of the results stated in Section~\ref{results} above.
Some preliminary observations are stated in Section~\ref{prelim} below.
The proof of Theorem~\ref{strapprox} is contained in Section~\ref{proof-strapprox}.
Theorems~\ref{clt} and \ref{clts} are proved in Section~\ref{proof-clt} and Section~\ref{proof-clts}, respectively.
Finally, the proof of Theorem~\ref{clt1} is given in Section~\ref{proof-clt1}.
\subsection{Preliminaries}
\label{prelim}
Our approach relies on the use of a renewal structure which is
induced on the paths of the random walk by the cycles of the underlying Markov chain $\eta.$
To define the renewal structure, set $\tau_0=0$ and let
\beq
\tau_{i+1}=\inf\bigl\{ j>\tau_i : \eta_j= u_1 \bigr\}, \qquad i\geq 0.
\feq
Thus, for $i\geq 1,$ $\tau_i$ are steps when the Markov chain $\eta$ visits the distinguished
state $u_1.$ Correspondingly, $s_{\tau_i}$ are successive times when the random walk chooses $u_1$ as the direction
of its motion. Recall $N_t$ from Section~\ref{intro} (see a few lines preceding \eqref{rw}).
Denote by $c(t)$ the number of times that the walker chooses direction $u_1$ before time $t>0.$
That is,
\beq
c(t):= \sup\bigl\{i\geq 0: s_{\tau_i}\leq t \bigr\}=\sum_{j=1}^{N_t} \textbf {1}_{\{\eta_j=u_1\}},
\feq
where $\textbf{1}_A$ stands for the indicator function of an event $A.$ Notice that $N_t$
is the unique mapping from $\rr_+$ to $\zz_+$ which has the following property:
\beq
s_{N_t}\leq t < s_{N_t+1}\qquad \mbox{and} \qquad \tau_{c(t)}\leq N_t < \tau_{c(t)+1}.
\feq
For $i\geq 0,$ let $\xi_i=\sum_{j=\tau_i+1}^{\tau_{i+1}}{T_j\eta_j}.$
Then
\beqn \label{decomp1}
X_t= \xi_0+\sum_{i=1}^{c(t)-1} \xi_i+\sum_{j=\tau_{c(t)}+1}^{N_t} T_j\eta_j + \bigl(t-s_{N_t}\bigr)\cdot \eta_{N_t}.
\feqn
The strong Markov property implies that the pairs $\bigl(\xi_i,\tau_{i+1}-\tau_i\bigr)_{i\in\nn}$ form an i.i.d. sequence
which is independent of $(\xi_0,\tau_1)$.
Furthermore, since $\eta$ is an irreducible finite-state Markov chain, there exist positive constants $K_1,K_2>0$
such that the inequality
\beqn
\label{tau-dist}
P(\tau_{i+1}-\tau_i > t) \leq K_1e^{-K_2\,t}
\feqn
holds uniformly for all reals $t\geq 0$ and all integers $i\geq 0.$
\par
We next list some direct consequences of the law of large numbers that will be frequently exploited
in the subsequent proofs. Let $v(n)$ be the number of times that the Markov chain $\eta$ visits $u_1$ during its first $n$ steps.
Thus, while $c(t)$ is the number of visits of $\eta$ to $u_1$ up to time $t>0$ on the clock of the random walk,
$v(n)$ is the number of occurrences of $u_1$ among first $n$ directions of the random walk. In particular,
$v(N_t)=c(t).$ Taking into account \eqref{tau-dist}, the law of large numbers and the renewal theorem imply that
\beq
\lim_{n\to\infty}\frac{\tau_n}{n}=\lim_{n\to\infty}\frac{n}{v(n)}=E(\tau_2-\tau_1)=\pi_1^{-1},\qquad \as,
\feq
and, letting $\Lambda_k:=\sum_{i=\tau_k+1}^{\tau_{k+1}} \eta_i,$
\beq
\mu &=& \lim_{n\to\infty} \frac{\sum_{i=1}^n\eta_i}{n} = \lim_{n\to\infty}\frac{\sum_{k=0}^{v(n)}\Lambda_k }{n} =
\pi_1\cdot E(\Lambda_1), \qquad \as
\feq
Since $\eta$ and $(T_k)_{k\in\nn}$ are independent, it follows that
\beqn
\label{average}
E(\xi_1)= E(T_1)\cdot E(\Lambda_1)=\pi_1^{-1} \mu \cdot E(T_1).
\feqn
Finally, $\frac{c(t)}{t}=\frac{v(N_t)}{t}= \frac{v(N_t)}{N_t}\cdot \frac{N_t}{t}$ yields
\beqn
\label{malpha}
 \lim_{t\to\infty}\frac{c(t)}{t}= \frac{\pi_1}{E(T_1)},\qquad \as
\feqn
We now turn to the proofs of our main results.
\subsection{Proof of Theorem~\ref{strapprox}}
\label{proof-strapprox}
\begin{proof} [\textbf{Part (a) of Theorem~\ref{strapprox}}]
Recall \eqref{tau-dist} and observe that the moment condition of the theorem along with the independence of the Markov chain $\eta$ and $(T_k)_{k\in\nn}$
of each other, implies that
\beqn
\nonumber
E(\| \xi_1 \|^p) &\leq& E\bigl[\bigl( s_{\tau_2}-s_{\tau_1}\bigr)^p\bigr] = \sum_{n=1}^\infty P(\tau_2-\tau_1=n) \cdot E\Bigl[\Bigl(\sum_{k=1}^n T_k\Bigr)^p\Bigr]
\\
\label{moments}
&\leq& K_1 \sum_{n=1}^\infty e^{-K_2 (n-1)} n^p E(T_1^p) <\infty,
\feqn
where we used Minkowski's inequality and \eqref{tau-dist}. It follows that $\|\xi_k\|=\mbox{o}\bigl(k^{1/p}\bigr).$
Indeed, for any $\veps>0,$ Chebyshev's inequality implies that
\beq
\sum_{k=1}^\infty P\bigl(\|\xi_k\|>k^{\frac{1}{p}}\veps \bigr)= \sum_{k=1}^\infty P\bigl(\|\xi_k\|^{p}>\veps^{p}k \bigr)
\leq \veps^{-p} E\bigl(\|\xi_1\|^{p}\bigr)<\infty,
\feq
and hence $P\bigl(\|\xi_k\|>k^{\frac{1}{p}}\veps~ \mbox{\io}\bigr)=0$ by the Borel-Cantelli lemma.
\par
For now we will make a simplifying assumption (to be removed later on) that $\mu=0.$
By virtue of \eqref{malpha}, the Marcinkiewicz-Zigmund law of large numbers implies that
\beq
\lim_{t\to\infty}\frac{\sum_{i=0}^{c(t)-1}{\xi_i}}{t^{1/p}}=0,\qquad \as
\feq
Furthermore, by \eqref{decomp1}, $\bigl \|X_t-\sum_{i=0}^{c(t)-1}\xi_i\bigr \|  \leq r_{c(t)},$
where
\beqn
\label{rk}
r_k:= \sum_{i=\tau_{k}+1}^{\tau_{k+1}}T_i.
\feqn
An argument similar to the one which we used to estimate the order of $\|\xi_n\|,$ shows that with probability one $r_n=o(n^{1/p}).$
Then \eqref{malpha} implies that
\beqn
\label{momentsr}
r_{c(t)}=o(t^{1/p}).
\feqn
This completes the proof of part~(a) of Theorem~\ref{strapprox} for the particular case $\mu=0.$
\par
We now turn to the general case of arbitrary finite $\mu\in\rr^d.$  Let
\beqn
\label{tilde}
\tilde{\eta}_i=\eta_i-\mu \qquad \mbox{and}\qquad \witi X_{t}=\sum_{i=0}^{N_t}{T_i\tilde{\eta}_i} + (t-s_{N_t})\tilde{\eta}_{N_t}.
\feqn
Then $\witi X_t$ is a directionally reinforced random walk associated with $(T_n)_{n\in\nn}$ and $\tilde \eta=(\tilde \eta_n)_{n\in \nn}.$
Since $E(\tilde \eta_i)=0,$ we have $\bigl\| \witi X_t \bigr \| = o\bigl(t^{1/p}\bigr).$
To complete the proof of part~(a) of the theorem, observe that $X_t - \witi X _t=\mu\cdot \sum_{i=1}^{N_t}{T_i} + \mu\cdot (t-s_{N_t}) = \mu t.$
\end{proof}
$\mbox{}$
\begin{proof} [\textbf{Part (b) of Theorem~\ref{strapprox}}]
Recall \eqref{average}. Let
\beq
\bar{\xi}_k := \xi_k - E(\xi_1) =  \xi_k - E(T_1) \pi_1^{-1}\mu
\feq
and
\beq
\Delta_k:= s_{\tau_{k+1}-1}-s_{\tau_k-1}-E(T_1) \pi_1^{-1}.
\feq
Let $\gamma_k=(\bar{\xi}_k, \Delta_k)\in \rr^{d+1}.$  Then $(\gamma_k)_{k\geq 1}$ is an i.i.d. sequence with $E(\gamma_1)=0\in\rr^{d+1}.$
Define
\beq
\Gamma(t)=\sum_{1\leq k\leq t}\gamma_k.
\feq
By virtue of Theorem~1~2.~1 in \cite{approxim}, there is a Brownian motion $\bigl(B(t)\bigr)_{t\geq 0}$ in $\rr^{d+1}$ such that
\beq
\sup_{0\leq t\leq T} \bigl \|\Gamma(t)-B(t) \bigr\| =o\bigl(T^{1/p}\bigr).
\feq
Then Theorem~2.~3.~6 in \cite{approxim} implies that there exists a Brownian motion $\bigl(W(t)\bigr)_{t\geq 0},$ such that
\beq
\sup_{0\leq t\leq T}\Bigl\|\sum_{k=0}^{c(t)-1} \xi_k -  t\mu   - W(t) \Bigr\| = o\bigl(T^{1/p}\bigr).
\feq
Recall $r_k$ from \eqref{rk}. Since
\beq
\sup_{0\leq t \leq T} \Bigl\| X_t- \sum_{k=0}^{c(t)-1}\xi_k\Bigr\| \leq \sup_{0\leq t \leq T} r_{c(t)},
\feq
it suffices to show that
\beqn
\label{maxe}
\sup_{0\leq t \leq T} r_{c(t)} = o(T^{1/p}).
\feqn
Notice that
\beq
\sup_{0\leq t \leq T} r_{c(t)}=\sup_{0\leq k \leq c(T)} r_k.
\feq
Therefore, by virtue of \eqref{malpha}, it suffices to show that
\beqn
\label{tend}
\lim_{n\to\infty} n^{-1/p} \cdot \sup_{0\leq k \leq n} r_k=0,\qquad \as
\feqn
Toward this end, let
\beq
g(n)=\max\bigl\{k\leq n: r_k\geq r_i~\mbox{for all}~1\leq i\leq n\bigr\},\qquad n\in\nn.
\feq
Thus $g(n)\leq n$ and $\sup_{0\leq k \leq n} r_k = r_{g(n)}.$ Furthermore, since $r_k$ are i.i.d. random
variables, $\lim_{n\to\infty}g(n)= \infty$ with probability one. Therefore, $r_n=o(n^{1/p})$
yields \eqref{tend}. The proof of Theorem~\ref{strapprox} is completed.
\end{proof}
\subsection{Proof of Theorem~\ref{clt}}
\label{proof-clt}
\begin{proof} [\textbf{Part (a) of Theorem~\ref{clt}}]
By \eqref{moments}, $E\bigl(\|\xi_1\|^2)\bigr)<\infty$ under the conditions of the theorem. Assume first that
$\mu=0.$ Then the invariance principle for i.i.d. sequences implies that
\beq
\frac{\sum_{k=1}^{[nt]} \xi_k }{\sqrt{n}} \Rightarrow W(t),\qquad t \in [0,1],
\feq
where $W(t)$ is a $d$-dimensional Brownian motion. It follows then from \eqref{malpha}
and Theorem~14.4 in \cite[p.~152]{measures} that
\beqn
\label{general}
\frac{\sum_{k=0}^{c(nt)-1} \xi_k }{\sqrt{n}} \Rightarrow  \sqrt{b} \cdot W(t),\qquad t \in [0,1],
\feqn
where $b=\frac{\pi_1}{\cdot E(T_1)}.$
Under the moment condition of Theorem~\ref{clt} we have the following counterpart of \eqref{maxe}:
\beq
\sup_{0\leq t \leq T} r_{c(t)} = o(T^{1/2}).
\feq
Since $\bigl\|X_{nt}-\sum_{k=0}^{c(nt)-1} \xi_k\bigr\| $ is
bounded above by $r_{c(nt)},$ it follows that
\beq
n^{-1/2}\cdot \Bigl\|X_{nt}-\sum_{k=0}^{c(nt)-1} \xi_k\Bigr\|  \Rightarrow 0,
\feq
which implies the desired convergence of $ n^{-1/2}\cdot X_{nt}$ when $\mu=0.$
To prove the general case of arbitrary $\mu\in\rr^d$ one can apply the result
with $\mu=0$ to the Markov chain $\tilde \eta_n$ and the random walk $\tilde X_t$
that were introduced in \eqref{tilde}. The proof of part~(a) of the theorem is completed.
\end{proof}
$\mbox{}$
\begin{proof} [\textbf{Part (b) of Theorem~\ref{clt}}]
Suppose first that $\mu=0.$ For $x\in\mbox{Span}(U) \subset \rr^d$  and $i\in\nn$ define
\beqn
\label{lileq1}
\xi_{i,x} := \xi_i\cdot x.
\feqn
Then, in view of \eqref{malpha}, the law of iterated logarithm for i.i.d. sequences implies that
there exists a constant $K(x)\in (0,\infty)$ such that
\beq
\limsup_{t\to\infty} \frac{\sum_{i=0}^{c(t)-1}\xi_{i,x}}{\sqrt{t\ln\ln t}}=K(x), \qquad \as
\feq
By \eqref{average} and \eqref{momentsr}
\beq
\lim_{t\to\infty}  \frac{\bigl|X_t\cdot x-\sum_{i=0}^{c(t)-1}\xi_{i,x}\bigr|}{\sqrt{t\ln\ln t}}=0,\qquad \as
\feq
Thus
\beq
\limsup_{t\to\infty} \frac{X_t \cdot x}{\sqrt{t\ln\ln t}}=K(x), \qquad \as,
\feq
in the case $\mu=0.$ To obtain the general case with an arbitrary $\mu\in\rr^d,$ apply this result to
the random walk $\witi X_t$ defined in \eqref{tilde} and recall that $X_t-\witi X_t=\mu t.$
The proof of part~(b) of Theorem~\ref{clt} is completed.
\end{proof}
\subsection{Proof of Theorem~\ref{clts}}
\label{proof-clts}
\begin{proof} [\textbf{Part (a)-(i) and part~(b) of Theorem~\ref{clts}}]
Let $\ol \rr_0^d:=[-\infty,\infty]^d\backslash\{0\},$
where $0$ stands for the zero vector in $\rr^d,$ and equip $\ol \rr_0^d$ with the topology inherited from $\rr^d.$
Recall (see for instance \cite{character,foundations}) that a random vector $\xi \in\rr^d$ is said to be regularly varying with index $\alpha> 0$
if there exists a function $a:\rr_+\to\rr,$ regularly varying with index $1/\alpha,$ and a Radon measure $\nu_\xi$ on $\ol \rr_0^d$ such that
\beqn
\label{nprop}
n P\bigl(a_n^{-1}\xi \in \cdot\bigr) \overset{v}{\Rightarrow}\nu_\xi(\cdot), \qquad \mbox{as}~n\to\infty,
\feqn
where $ \overset{v}{\Rightarrow}$ denotes the vague convergence of measures.
We will denote by $\calr_{d,\alpha,a}$ the set of all random $d$-vectors regularly varying with index $\alpha,$
associated with a given function $a\in\calr_{1/\alpha}$ by \eqref{nprop}. The measure $\nu$ is referred to as the {\em measure of regular variation} associated with $\xi.$
We will also use the following equivalent definition of the regular variation for random vectors (see, for instance, \cite{character,foundations}).
Let $S^{d-1}$ denote the unit sphere in $\rr^d$ with respect to the norm $\|\cdot\|.$ Then $\xi \in \calr_{d,\alpha,a}$ if and only if there exists
a finite Borel measure $\ms_\xi$ on $S^{d-1}$ such that for all $t>0,$
\beqn
\label{dprop}
n P\bigl(\|\xi\|>ta_n;\,\xi/\|\xi\|\in \cdot\bigr) \overset{v}{\Rightarrow}
t^{-\alpha}\ms_\xi(\cdot), \qquad \mbox{as}~n\to\infty,
\feqn
where $ \overset{v}{\Rightarrow}$ denotes the vague convergence of measures on $S^{d-1}.$
The following well-known result is the key to the proof of the next lemma: if $\xi,\eta\in \calr_{d,\alpha,a}$ and $\xi,\eta$ are independent of each other,
then $\nu_{\xi_1+\eta}=\nu_\xi+\nu_\eta$ and $\ms_{\xi+\eta}=\ms_\xi+\ms_\eta.$ We have:
\begin{lemma}
\label{xi-tail}
Let Assumption~\ref{amain} hold. For $t\geq 0,$ let $a_t$ be defined as in \eqref{an}. Then
\item [(a)] $\sum_{\tau_1+1}^{\tau_2}T_i\in \calr_{1,\alpha,a}.$
\item [(b)] $\xi_1\in \calr_{d,\alpha,a}.$
\end{lemma}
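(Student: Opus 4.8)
The plan is to read both parts as manifestations of the ``single big jump'' principle for random sums of regularly varying summands, with the exponential bound \eqref{tau-dist} on the cycle length providing the control needed to turn heuristics into limits. Throughout, Assumption~\ref{amain} is exactly the statement that $P(T_1>t)$ is regularly varying of index $-\alpha$, so that $T_1\in\calr_{1,\alpha,a}$ with the normalization $a_t$ of \eqref{an}; this is the only input about $T_1$ I will use.

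For part~(a), the quantity $\sum_{j=\tau_1+1}^{\tau_2}T_i$ is a random sum of i.i.d.\ copies of $T_1$ in which the number of summands $M:=\tau_2-\tau_1$ is independent of $(T_j)_{j\in\nn}$ and, by \eqref{tau-dist}, satisfies $E\bigl((1+\veps)^M\bigr)<\infty$ for all small $\veps>0$. The classical asymptotics for subexponential (in particular, regularly varying) summands then give $P\bigl(\sum_{j=1}^M T_j>t\bigr)\sim E(M)\,P(T_1>t)=\pi_1^{-1}P(T_1>t)$, so that the sum lies in $\calr_{1,\alpha,a}$ with $\nu=\pi_1^{-1}\nu_{T_1}$.

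For part~(b) the summands $T_j\eta_j$ are independent across $j$ but not identically distributed, since the directions follow the chain $\eta$. I would therefore condition on the $\sigma$-algebra generated by the cycle, namely on $M$ together with the directions $v_j:=\eta_{\tau_1+j}$ for $1\le j\le M$. Given this data, $\xi_1=\sum_{j=1}^M T_jv_j$ is a sum of independent vectors, each $T_jv_j$ being regularly varying of index $\alpha$ (same normalization $a_t$) with measure $\nu_{T_1v_j}$ supported on the ray $\{sv_j:s>0\}$. The additivity of the measures of regular variation for independent summands, recalled just above the lemma, then yields conditionally $\xi_1\in\calr_{d,\alpha,a}$ with measure $\sum_{j=1}^M\nu_{T_1v_j}$. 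Integrating over the cycle and writing $L_v$ for the number of $j\le M$ with $v_j=v$, I expect
\beq
\nu_{\xi_1}=\sum_{v\in U}E(L_v)\,\nu_{T_1v}=\pi_1^{-1}\sum_{v\in U}\pi_v\,\nu_{T_1v},
\feq
so that the spectral measure $\ms_{\xi_1}$ is concentrated on $U$ with mass proportional to $\pi_v$ at each direction $v$; it is nonzero because $T_1$ is genuinely heavy-tailed.

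The main obstacle is justifying the interchange of $\lim_{n\to\infty}$ with the expectation over the random cycle. For a Borel set $A\subset\ol\rr_0^d$ bounded away from the origin, say at distance $\delta>0$, the bound $\|\sum_{j=1}^m T_jv_j\|\le\sum_{j=1}^m T_j$ (the $v_j$ being unit vectors) gives $nP\bigl(a_n^{-1}\sum_{j=1}^m T_jv_j\in A\bigr)\le nP\bigl(\sum_{j=1}^m T_j>\delta a_n\bigr)$. Kesten's uniform bound for subexponential laws supplies a constant $C_\veps$ with $P\bigl(\sum_{j=1}^m T_j>t\bigr)\le C_\veps(1+\veps)^m P(T_1>t)$ for all $m$ and $t$, so the right-hand side is at most $C_\veps(1+\veps)^m\,nP(T_1>\delta a_n)\le C'(1+\veps)^m$ for all large $n$. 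Since $E\bigl((1+\veps)^M\bigr)<\infty$ by \eqref{tau-dist}, dominated convergence legitimizes passing the limit inside the expectation, and the same scalar bound furnishes the domination needed in part~(a). The remaining points---that $A$ may be taken in a convergence-determining class and that the boundary case $\alpha=2$ requires only the standard slowly-varying bookkeeping in \eqref{an}---are routine.
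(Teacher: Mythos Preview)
Your proof is correct and follows essentially the same route as the paper's: condition on the cycle data $(M;v_1,\ldots,v_M)$, use the additivity of regular-variation measures for independent summands (the property stated just before the lemma), and justify the interchange of limit and expectation by a domination argument combined with the exponential tail \eqref{tau-dist} of the cycle length. The only substantive difference is the dominating function---you invoke Kesten's uniform subexponential bound $P\bigl(\sum_{j=1}^m T_j>t\bigr)\le C_\veps(1+\veps)^m P(T_1>t)$, whereas the paper uses the direct estimate $nP\bigl(\sum_{j=1}^k T_j>ta_n\bigr)\le nkP(T_1>ta_n/k)\le Ct^{-\alpha}k^{1+\alpha}$; either is swallowed by the exponentially decaying weight in $k$, and your version has the bonus of yielding the explicit spectral measure $\ms_{\xi_1}\propto\sum_{v\in U}\pi_v\,\delta_v$.
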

\begin{proof}[Proof of Lemma~\ref{xi-tail}]
It is not hard to see that the claim of part~(a) can be formally deduced from that of part~(b).
Thus we will focus on proving the more general claim (b).
\par
First, observe that \eqref{dprop} implies that $T_1u\in \calr_{d,\alpha,a}$ for any $u\in U.$ Let
\beq
H(u,v)=P(\eta_{n+1}=v|\eta_n=u), \qquad u,v\in U,
\feq
be the transition matrix of the Markov chain $\eta.$ Further, define a sub-Markovian kernel $\Theta$ by setting
\beq
\Theta(u,v)=H(u,v)\cdot \one{v\not =u_1}, \qquad u,v\in U.
\feq
Fix any $t>0$ and a Borel set $B\subset S^{d-1},$ and let
\beq
A_n=\bigl\{\|\xi_1\|>ta_n;\,\xi_1/\|\xi_1\|\in B\bigr\},\qquad n\in\nn.
\feq
Then,
\beq
&&
P(\xi_1\in A_n)=\sum_{k=1}^\infty P(\tau_2-\tau_1=k)P(T_1u_1+T_2\eta_2+\ldots +T_k\eta_k \in A_n|\tau_2-\tau_1=k\bigr)=
\\
&&
\quad
\sum_{k=1}^\infty \sum_{v_2\not= u_1}\cdots \sum_{v_k\not= u_1} \Theta(u_1,v_1)\cdots \Theta(v_{k-1},v_k) H(v_k,u_1)
P\bigl(T_1u_1+T_2v_2+\ldots +T_kv_k\in A_n\bigr),
\feq
where we assume that the sums $\sum_{v_2\not= u_1}\cdots \sum_{v_k\not= u_1}$ are empty if $k=1.$
Let
\beq
J_n(v_2,\ldots,v_k)=T_1u_1+T_2v_2+\ldots +T_kv_k.
\feq
Notice that for any $k\in\nn$ and fixed set of vectors $v_2,\ldots,v_k\in U,$ we have
\beq
&&
n \cdot P\bigl(J_n(v_2,\ldots,v_k) \in A_n\bigr)
\leq
n \cdot P\bigl( \|J_n(v_2,\ldots,v_k)\| \geq t a_n \bigr) \leq n P\Bigl( \sum_{j=1}^k T_j \geq t a_n \Bigr)
\\
&&
\qquad
\leq
n k P\bigl(  T_1 \geq t a_n/k \bigr) \leq C t^{-\alpha}k^{1+\alpha}
\feq
for some $C>0.$ Furthermore,
\beq
\lim_{n\to\infty} n \cdot P\bigl(J_n(v_2,\ldots,v_k) \in A_n\bigr) =t^{-\alpha}\Bigl(\ms_{T_1u_1}(B)+\sum_{j=2}^k \ms_{T_1v_j}(B)\Bigr).
\feq
Observe that the spectral radius of the matrix $\Theta$ is strictly less than one and that $\ms_{T_1v_j}(B)$ is uniformly bounded from above
by $\max_{v\in U} \ms_{T_1 v}\bigl(S^{d-1}\bigr).$  Therefore, the dominated convergence theorem implies that the following limit exists and the identity holds:
\beq
&&
\lim_{n\to\infty} n\cdot P(\xi_1\in A_n)
\\
&&
~
=\sum_{k=1}^\infty t^{-\alpha} \sum_{v_2\not= u_1}\cdots \sum_{v_k\not= u_1} \Theta(u_1,v_1)\cdots \Theta(v_{k-1},v_k) H(v_k,u_1)
\Bigl(\ms_{T_1u_1}(B)+\sum_{j=2}^k \ms_{T_1v_j}(B)\Bigr).
\feq
Since the spectral radius of $\Theta$ is strictly less than one, Fubini's theorem implies that
the right-hand side of the above identity defines a measure on $S^{d-1}.$ The proof of the lemma is therefore completed.
\end{proof}
We are now in a position to complete the proof of the limit results stated in parts~(a) and~(b) of Theorem~\ref{clts}.
Suppose first that $\mu = 0.$ It follows from Lemma \ref{xi-tail} and the stable limit theorem for i.i.d. sequences
(see, for instance, Section~1.6 in \cite[p.~75]{htrw-book}) that
\beqn
\label{precon1}
\frac{\sum_{k=1}^{[nt]} \xi_k }{a_n} \Rightarrow S_\alpha(t),\qquad t \in [0,1],
\feqn
where $S_\alpha(t)$ is a homogeneous vector-valued process in $D\bigl(\rr^d\bigr)$ with independent increments and
$S_\alpha(1)$ distributed according to a stable law of index $\alpha.$
Then (similarly to \eqref{general}),  asymptotic equivalence \eqref{malpha} along with the suitable modification
of Theorem~14.4 in \cite[p.~152]{measures} implies
\beq
\frac{\sum_{k=0}^{c([nt])-1} \xi_k }{a_n} \Rightarrow b^{1/\alpha} \cdot S_\alpha(t),\qquad t \in [0,1],
\feq
where $b=\frac{\pi_1}{\cdot E(T_1)}.$ In particular, using $t=1,$
\beqn
\label{precon2}
\frac{\sum_{k=0}^{c(n)-1} \xi_k }{a_n} \Rightarrow b^{1/\alpha} \cdot S_\alpha(1),
\feqn
Recall $r_k$ from \eqref{rk}. Since
\beq
\Bigl\| X_t- \sum_{k=0}^{c(t)-1}\xi_k\Bigr\| \leq r_{c(t)},
\feq
an application of the renewal theorem shows that
\beq
\frac{X_n}{n} \Rightarrow \call_\alpha\qquad \mbox{and hence} \qquad \frac{X_{\lfloor t \rfloor }}{t} \Rightarrow \call_\alpha.
\feq
Since $\bigl\| X_{\lfloor t \rfloor}- X_t\bigr\| \leq 1,$ the proof of part (a)-(i) of Theorem~\ref{clts} is completed.
\end{proof}
$\mbox{}$
\begin{proof} [\textbf{Part (a)-(ii) of Theorem~\ref{clts}}]
For $V \in U$ let $c_v(t)$ be the number of occurrences of $v$ in the set $\{\eta_1,\eta_2,\ldots,\eta_{N_t}\}.$ That is,
\beq
c_v(t)=\sum_{k=1}^{N_t} \one{\eta_k=v},\qquad n\in\nn,\,i\in\cald.
\feq
Notice that $c_{u_1}(t)=c(t),$ where $c(t)$ is introduced in Section~\ref{prelim}. Similarly to \eqref{malpha} we have
\beqn
\label{malpha-g}
 \lim_{t\to\infty}\frac{c_v(t)}{t}= \frac{\pi_v}{E(T_1)},\qquad \as,
\feqn
where $\pi_v$ is the mass that the stationary distribution of the Markov chain $\eta$ puts on $v.$
\par
Define $\tau_v(0)=0$ and $\tau_v(j)=\inf\{k>\tau_v(j-1):\eta_k=v\}$ for $j\in\nn.$ For $v\in U$ and $t\geq 0,$ let
\beq
\witi B_v(t)=\sum_{i=0}^{c_v(t)-1} T_{\tau_v(i)}-c_v(t) \cdot E(T_1).
\feq
Then, the law of iterated logarithm for heavy-tailed i.i.d. sequences (see
Theorems~1.6.6 and 3.9.1 in \cite{htrw-book}) combined with \eqref{malpha} yields
\beqn
\label{decompo4}
\limsup_{t\to\infty} \frac{ \witi B_v(t)}{a_t\cdot (\ln t)^{1/\alpha+\veps}}
=
\left\{
\begin{array}{ll}
0&\mbox{\rm ~if~}~\veps>0,\\
\infty&\mbox{\rm ~if~}~\veps<0
\end{array}
\right.\qquad \mbox{\rm a.\,s.}
\feqn
For $v\in U,$ let
\beq
B_v(t)=\sum_{i=0}^{c_v(t)-1} \bigl(T_{\tau_v(i)}-E(T_1)\bigr)+(t-s_{N_t})\one{\eta_{_{N_t}}=v}.
\feq
Then, \eqref{rw} implies that
\beq
X_t=\sum_{v\in U} vB_v(t)+ \Bigl(\sum_{v\in U} v\cdot c_v(t)\cdot E(T_1)-\mu \cdot t\Bigr).
\feq
Taking into account \eqref{malpha-g}, a standard inversion argument allows one
to deduce from the law of iterated logarithm for $\tau_v(n)$ that
\beqn
\label{esti1}
\limsup_{t\to\infty} \frac{\Bigl\|\sum_{v\in U} v\cdot c_v(t)\cdot E(T_1)-\mu \cdot t\Bigr\| }{\sqrt{t\ln \ln t}} <\infty,\qquad \as
\feqn
Since $a_t \in\calr_\alpha$ with $\alpha\in (1,2),$
\beq
\lim_{t\to\infty} \frac{\sqrt{t\ln \ln t}}{a_t\cdot (\ln t)^{1/\alpha+\veps}}=0.
\feq
Thus \eqref{decompo4} along with \eqref{esti1} yields part (a)-(ii) of Theorem~\ref{clts}, provided
that we are able to show that for any $u,v\in U$ and all $\delta\in (1/(2\alpha),1/\alpha),$
\beqn
\label{remains}
P\Bigl(\bigl(G_{n,v}\cap E_{n,v}\bigr)~\mbox{and}~\bigl(G_{n,u}\cap E_{n,u}\bigr) \,\io\Bigr)=0,
\feqn
where the events $G_{n,v}$ and $E_{n,v}$ are defined for $n\in\nn$ and $v\in U$ as follows.
For $n\in\nn$ let $\gamma_n=2n\cdot \max_{v\in V} \pi_v.$ Let
\beq
E_{n,v}=\Bigl\{ \max_{1\leq m\leq \gamma_n} \Bigl|\sum_{i=0}^{m-1}T_{\tau_v(i)}-m \cdot E(T_1) \Bigr|>a_n\cdot (\ln n)^\delta\Bigr\}
\quad
\mbox{and}
\quad
G_{n,v}=\{ c_v(n)>\gamma_n\}.
\feq
We then have:
\beq
&&
P\Bigl(\bigl(G_{n,v}\cap E_{n,v}\bigr)~\mbox{and}~\bigl(G_{n,u}\cap E_{n,u}\bigr)\Bigr)
\\
&&
\qquad
\leq
P\bigl(E_{n,v}\bigcap E_{n,u}\bigr) +P\bigl(c_v(n)>\gamma_n\bigr)+P\bigl(c_u(n)>\gamma_n\bigr)
\\
&&
\qquad
=
P(E_{n,v})\cdot P(E_{n,u}) +P\bigl(c_v(n)>\gamma_n\bigr)+P\bigl(c_u(n)>\gamma_n\bigr).
\feq
It follows from the large deviation principle for $c_v(n)/n$ that $P\bigl(c_v(n)>\gamma_n\bigr)<K_ve^{-n\lambda_v}$
for some $K_v>0$ and $\lambda_v>0.$ Furthermore, for any $A>0$ and $k_n=[A^n],$ we have
$P(E_{k_n,v})\leq Cn^{-\beta}$ for some constants $\beta>1/2$ and $C>0$
(see \cite[p.~177]{htrw-book}; here we exploit the constraint $2\alpha\delta>1$). The Borel-Cantelli lemma implies
then that $P\bigl(E_{k_n,v}\bigcap E_{k_n,u}~\io\bigr)=0.$ Since for any $n\in\nn$ there is a unique $j(n)\in\nn$ such that
$k_{j(n)}\leq n <k_{j(n)+1},$ and $\lim_{k\to\infty} \frac{a_{k+1}(\ln a_{k+1})^\delta}{a_k(\ln a_k)^\delta}=1,$
this yields \eqref{remains}. The proof of part (a)-(ii) of Theorem~\ref{clts} is therefore completed.
\end{proof}
\subsection{Proof of Theorem~\ref{clt1}}
\label{proof-clt1}
Define two families of processes, $(B_n)_{n\in\nn}$ and $(C_n)_{n\in\nn}$ in $D(\rr),$ by setting
\beqn
\label{precon}
B_n(t)= \frac{\sum_{k=1}^{[nt]} \xi_k }{a_n}\quad \mbox{and} \quad C_n(t)= \frac{s_{\tau_{[nt]}}}{a_n} ,\qquad t \in [0,1].
\feqn
Lemma~\ref{xi-tail} combined with \cite[Theorem~1.1]{character} implies that $(\xi_1,s_{\tau_2}-s_{\tau_1})\in \calr_{d+1,\alpha,a},$ and hence
\beqn
\label{precon4}
(B_n,C_n) \Rightarrow (S_\alpha,U_\alpha),
\feqn
where $S_\alpha$ and $U_\alpha$ are homogeneous process with independent increments in $D\bigl(\rr^d\bigr)$ and $D(\rr),$ respectively,
such that $S_\alpha(1)$ and $U_\alpha(1)$ have (multivariate in the former case) stable distributions of index $\alpha.$
\par
Let $U_n^{-1}$ and $C_n^{-1}$ denote the inverse processes of $U_n$ and $C_n,$ respectively.
One can define $C_n^{-1}$ explicitly as follows:
\beqn
\label{ex}
C_n^{-1}(t) = n^{-1}c(a_n t),\qquad t\in [0,1].
\feqn
Then the same argument as in \cite[pp.~380-381]{limits} shows that (alternatively, one can use the result of \cite{inverse}):
\beq
\bigl(B_n,C_n^{-1}\bigr) \Rightarrow (S_\alpha, U_\alpha^{-1})
\feq
in $D\bigl(\rr^{d+1}\bigr).$ This along with \eqref{ex} implies (see, for instance, \cite[p.~151]{measures}) that
\beq
\frac{\sum_{i=1}^{c(a_n)-1}\xi_i}{a_n} \Rightarrow \call_\alpha,
\feq
where
\beqn
\label{specific}
\call_\alpha:=S_\alpha(U_\alpha^{-1})(1).
\feqn
Passing to the subsequence $m_n=\lfloor a_n^{-1} \rfloor $ and using basic properties
of regularly varying functions, we obtain
\beqn
\label{prec4}
\frac{\sum_{i=0}^{c(n)-1}\xi_i}{n} \Rightarrow \call_\alpha.
\feqn
To conclude the proof of the theorem one can use verbatim the argument
along the lines following \eqref{precon2} in the concluding paragraph of the above proof of part~(a)-(i) of
Theorem~\ref{clts}. Namely, taking into account the inequality
\beq
\Bigl\| X_t- \sum_{k=0}^{c(t)-1}\xi_k\Bigr\| \leq r_{c(t)}
\feq
and using the renewal theorem which ensures the weak convergence of $r_{c(t)}$ to a proper random variable,
\eqref{prec4} yields that $\frac{X_t}{t} \Rightarrow \call_\alpha.$
The proof of Theorem~\ref{clt1} is completed.
\qed

\providecommand{\MRhref}[2]{%
\href{http://www.ams.org/mathscinet-get$item?mr=#1}{#2}} \providecommand{\href}[2]{#2}

\end{document}